\definecolor{darkviolet}{rgb}{0.58, 0.0, 0.83}
\tikzstyle{vertex}=[circle, draw, inner sep=0pt, minimum size=4pt]
\tikzstyle{vtx}=[circle, draw, inner sep=0pt, minimum size=8pt]
\def\ZZ{{\mathbb Z}}
\def\QQ{{\mathbb Q}}
\def\RR{{\mathbb R}}
\def\CC{{\mathbb C}}
\newenvironment{psmallmatrix}
  {\left(\begin{smallmatrix}}
  {\end{smallmatrix}\right)}
\newcommand\evsout{\bgroup\markoverwith{\textcolor{darkviolet}{\rule[.5ex]{4pt}{.6pt}}}\ULon}
\newcounter{counter} \numberwithin{counter}{section}    
\newtheorem{proposition}[counter]{Proposition}
\newtheorem{theorem}[counter]{Theorem}
\newtheorem{definition}[counter]{Definition}
\let\olddefinition\definition
\renewcommand{\definition}{\olddefinition\normalfont}
\newtheorem{remark}[counter]{Remark}
\newtheorem{corollary}[counter]{Corollary}
\newtheorem{example}[counter]{Example}
\newcommand{\pmattwotwo}[4]{\begin{pmatrix} #1 & #2\\ #3 & #4\end{pmatrix}}
\DeclareMathOperator{\Gal}{Gal}
\DeclareMathOperator{\Ded}{Ded}
\numberwithin{equation}{section}
\numberwithin{figure}{section}
\title{The kernel of newform Dedekind sums}
\author{Evuilynn Nguyen}
\address{Department of Mathematics, Rhodes College, United States}
\email{{\href{mailto:nguet-21@rhodes.edu}{nguet-21@rhodes.edu}}}
\author{Juan J. Ramirez}
\address{Department of Mathematics, University of Houston, United States}
\email{\href{mailto:juan.ramirez789456@gmail.com}{jjramirez8@uh.edu}}
\author{Matthew P. Young}
\address{Department of Mathematics, Texas A\&M University, United States}
\email{\href{mailto:myoung@math.tamu.edu}{myoung@math.tamu.edu}}
\begin{document}
\begin{abstract}
Newform Dedekind sums are a class of crossed homomorphisms that arise from newform Eisenstein series.
We initiate a study of the kernel of these newform Dedekind sums.   Our results can be loosely described as showing that these kernels are neither ``too big" nor ``too small."
We conclude with an observation about the Galois action on Dedekind sums that allows for significant computational efficiency in the numerical calculation of Dedekind sums.
\end{abstract}
\maketitle
\section{Introduction}
\subsection{Background and prior work}
Dedekind sums were first introduced by Dedekind as a way to express the transformation formula satisfied by the Dedekind $\eta$-function.  The study of Dedekind sums have shown up in different areas of mathematics including algebraic number theory, combinatorial geometry, topology, and mathematical physics.  Let $h,k$ be coprime integers with $k \geq 1$. The classical Dedekind sum is defined 
by
\begin{equation}
    s(h,k) = \sum_{n \tiny{\mbox{ mod }} k}B_1\Big(\frac{n}{k}\Big) B_1\Big(\frac{hn}{k}\Big),
    \label{eq:classicaldedsum}
\end{equation}
where $B_1$ denotes the first Bernoulli function defined by
\begin{align}\label{eq:bernoulli}
 B_1(x) &=
  \begin{cases}
   x-\lfloor x \rfloor - \tfrac{1}{2}        & \text{if } x \in \RR \setminus \ZZ \\
   0        & \text{if } x \in \ZZ.
  \end{cases}
\end{align}
For more background on the Dedekind $\eta$-function and the classical Dedekind sum, we refer the reader to \cite{modfunc}.

Many different generalized versions of the Dedekind sum have appeared in the literature. In this paper, we continue the study of newform Dedekind sum examined recently by Stucker, Vennos, and Young in \cite{stuckervennosyoung} and Dillon and Gaston \cite{dillongaston}; see their introductions for a more thorough historical survey of previous work on these types of Dedekind sums.  

We now summarize the construction of newform Dedekind sums and some of their basic properties which follow from properties of an associated Eisenstein series (via a generalized Kronecker limit formula).  To this end, we first discuss the Eisenstein series.
Throughout we let $\chi_1$ and $\chi_2$ be primitive Dirichlet characters modulo $q_1$ and $q_2$, respectively, with $q_1$, $q_2 > 1$, and satisfying $\chi_1 \chi_2(-1) = 1$. Let $\Gamma_0(q_1 q_2)$ denote the congruence subgroup of level $q_1 q_2$.  The newform Eisenstein series $E_{\chi_1, \chi_2}(z,s)$ associated to $\chi_1, \chi_2$ may be defined by the Fourier expansion
\begin{equation}
\label{eq:EisensteinFourierExpansion}
E_{\chi_1, \chi_2}(z,s) = 2 \sqrt{y} \sum_{n \neq 0} \lambda_{\chi_1,\chi_2}(n,s) e(nx) K_{s-\frac12}(2 \pi |n| y),
\end{equation}
where 
\begin{equation}
\lambda_{\chi_1, \chi_2}(n,s) = \chi_2(\text{sgn}(n)) \sum_{ab=|n|} \chi_1(a) \overline{\chi_2}(b) \Big(\frac{b}{a}\Big)^{s-\frac12}.  
\end{equation}
 This Eisenstein series enjoys a host of pleasant properties, such as:
\begin{enumerate}
\item It satisfies the automorphy formula $E_{\chi_1, \chi_2}(\gamma z, s) = \psi(\gamma) E_{\chi_1, \chi_2}(z,s)$, for all $\gamma \in \Gamma_0(q_1 q_2)$, where $\psi = \chi_1 \overline{\chi_2}$, and $\psi(\gamma) = \psi(d)$ for $\gamma = (\begin{smallmatrix} a & b \\ c & d \end{smallmatrix})$.
\item It has analytic continuation to all $s \in \CC$, with no poles.
\item It satisfies $\Delta E_{\chi_1, \chi_2}(z,s) = s(1-s) E_{\chi_1, \chi_2}(z,s)$, where $\Delta$ is the hyperbolic Laplacian.
\item It is an eigenfunction of all the Hecke operators.
\item It is a pseudo-eigenfunction of the Atkin-Lehner operators.
\end{enumerate}
All of these properties may be conveniently found in \cite{YoungEisenstein}, for instance.

The reason for the name ``newform" Eisenstein series is by perfect analogy with the corresponding notion for cusp forms. 

One should be aware that if $q_1 = 1$ or $q_2 = 1$ then there exists an additional constant term in 
\eqref{eq:EisensteinFourierExpansion}, and if $q_1 = q_2=1$ then $E_{\chi_1, \chi_2}(z,s)$ has simple poles at $s=0,1$ only.  Many properties of the classical Dedekind sum \eqref{eq:classicaldedsum} can be deduced from the behavior of the level $1$ Eisenstein series, especially its Laurent expansion around $s=1$.  The presence of the pole at $s=1$ is something of a nuisance, and it is a pleasant fact that all the newform Eisenstein series are analytic at $s=1$.  

Using the evaluation $K_{1/2}(2 \pi y) = 2^{-1} y^{-1/2} e^{-2 \pi y}$ in \eqref{eq:EisensteinFourierExpansion}, we obtain a decomposition
\begin{equation}
\label{eq:fintermsofE}
E_{\chi_1, \chi_2}(z,1) = f_{\chi_1, \chi_2}(z) + \chi_2(-1) \overline{f}_{\overline{\chi_1}, \overline{\chi_2}}(z),
\end{equation}
where
\begin{equation}
\label{eq:fchi1chi2FourierExpansion}
f_{\chi_1, \chi_2}(z) = \sum_{n=1}^{\infty} \frac{\lambda_{\chi_1, \chi_2}(n,1)}{\sqrt{n}} e^{2 \pi i n z}.
\end{equation}
Note that $f_{\chi_1, \chi_2}(z)$ is holomorphic on $\mathbb{H}$, is periodic with period $1$, and vanishes at $i \infty$.  We also observe that the level $1$ analog of $f_{\chi_1, \chi_2}(z)$ is closely related to $\log \eta$.  One may define the newform Dedekind sum $S_{\chi_1, \chi_2}$ as a correction factor to the automorphy of $f_{\chi_1, \chi_2}(z)$, precisely, for $\gamma \in \Gamma_0(q_1 q_2)$, we let
\begin{equation}
\label{eq:DedekindSumDefviaEisensteinSeries}
\frac{\pi i}{\tau(\overline{\chi_1})} S_{\chi_1, \chi_2}(\gamma) = f_{\chi_1, \chi_2}(\gamma z) - \psi(\gamma) f_{\chi_1, \chi_2}(z),
\end{equation}
where $\tau(\chi)$ denotes the standard Gauss sum; the factor $\frac{\pi i}{\tau(\overline{\chi_1})}$ is a normalization to simplify various formulas.  From the automorphy of $E_{\chi_1, \chi_2}$
and \eqref{eq:fintermsofE} we deduce that the right hand side of \eqref{eq:DedekindSumDefviaEisensteinSeries} is 
both holomorphic and anti-holomorphic, and is hence
a constant function of $z$; this explains why the left hand side of \eqref{eq:DedekindSumDefviaEisensteinSeries} only depends on $\gamma$.
For simplicity, we may refer to the newform Dedekind sum simply as the Dedekind sum.  

As observed in \cite[Section 5]{stuckervennosyoung}, an alternative expression for $f_{\chi_1, \chi_2}$ is as an Eichler integral of a weight $2$ holomorphic Eisenstein series associated to the characters $\chi_1, \chi_2$.

The most important basic property of the Dedekind sum $S_{\chi_1,\chi_2}: \Gamma_0(q_1q_2) \rightarrow \CC$ is that it is a crossed homomorphism, which we record with the following:
\begin{theorem}[Crossed homomorphism identity]
For all $\gamma_1, \gamma_2 \in \Gamma_0(q_1 q_2)$, we have
\begin{align}\label{eq:hom}
S_{\chi_1,\chi_2}(\gamma_1\gamma_2) = S_{\chi_1,\chi_2}(\gamma_1)+\psi(\gamma_1)S_{\chi_1,\chi_2}(\gamma_2).
\end{align} 
\end{theorem}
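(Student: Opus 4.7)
The plan is to derive the identity directly from the defining equation \eqref{eq:DedekindSumDefviaEisensteinSeries} by inserting and subtracting an intermediate term, and then using the multiplicativity of the character $\psi = \chi_1 \overline{\chi_2}$.

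Let me set $c = \pi i / \tau(\overline{\chi_1})$ for notational convenience, so the definition reads $c\, S_{\chi_1,\chi_2}(\gamma) = f_{\chi_1,\chi_2}(\gamma z) - \psi(\gamma) f_{\chi_1,\chi_2}(z)$, holding as a constant function of $z$. Applying this to $\gamma_1 \gamma_2$ gives
\begin{equation*}
c\, S_{\chi_1,\chi_2}(\gamma_1 \gamma_2) = f_{\chi_1,\chi_2}(\gamma_1 \gamma_2 z) - \psi(\gamma_1 \gamma_2) f_{\chi_1,\chi_2}(z).
\end{equation*}
Now I would insert the telescoping term $\pm \psi(\gamma_1) f_{\chi_1,\chi_2}(\gamma_2 z)$ to rewrite the right hand side as
\begin{equation*}
\bigl[f_{\chi_1,\chi_2}(\gamma_1(\gamma_2 z)) - \psi(\gamma_1) f_{\chi_1,\chi_2}(\gamma_2 z)\bigr] + \psi(\gamma_1)\bigl[f_{\chi_1,\chi_2}(\gamma_2 z) - \psi(\gamma_2) f_{\chi_1,\chi_2}(z)\bigr],
\end{equation*}
using $\psi(\gamma_1\gamma_2) = \psi(\gamma_1)\psi(\gamma_2)$, which holds because $\psi$ is a Dirichlet character applied to the lower-right entries, and for $\gamma_1, \gamma_2 \in \Gamma_0(q_1 q_2)$ the lower-right entry of the product is congruent mod $q_1 q_2$ to the product of the lower-right entries. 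Applying \eqref{eq:DedekindSumDefviaEisensteinSeries} to the first bracket (with $z$ replaced by $\gamma_2 z$, which is legitimate since the left hand side is independent of $z$) recognizes it as $c\, S_{\chi_1,\chi_2}(\gamma_1)$, while the second bracket is $c\, S_{\chi_1,\chi_2}(\gamma_2)$ by definition.

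Combining, we obtain $c\, S_{\chi_1,\chi_2}(\gamma_1\gamma_2) = c\, S_{\chi_1,\chi_2}(\gamma_1) + \psi(\gamma_1)\cdot c\, S_{\chi_1,\chi_2}(\gamma_2)$, and dividing by $c$ yields \eqref{eq:hom}. The only genuine subtlety worth flagging is the multiplicativity of $\psi$ on $\Gamma_0(q_1 q_2)$, but this is immediate from matrix multiplication modulo the level, so no real obstacle arises; the argument is essentially a cocycle-style telescoping computation that is standard for crossed homomorphisms built from automorphy defects.
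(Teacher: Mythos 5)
Your proof is correct and is essentially the argument the paper has in mind: the paper simply states that the identity ``follows directly from \eqref{eq:DedekindSumDefviaEisensteinSeries}'' (deferring details to \cite[Lemma 2.2]{stuckervennosyoung}), and the standard telescoping computation you carry out, together with the multiplicativity of $\psi$ on lower-right entries modulo the level, is exactly that derivation.
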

This follows directly from \eqref{eq:DedekindSumDefviaEisensteinSeries} (see \cite[Lemma 2.2]{stuckervennosyoung} for more details).
\begin{remark}
For $\gamma \in \Gamma_1(q_1q_2)$, then $\psi(\gamma) = 1$, so $S_{\chi_1,\chi_2}: \Gamma_1(q_1 q_2) \rightarrow \CC$ is a group homomorphism.
\end{remark} 

Although \eqref{eq:DedekindSumDefviaEisensteinSeries} is useful for establishing many properties of the Dedekind sum, it is not explicit; the Fourier expansion \eqref{eq:fchi1chi2FourierExpansion} is an infinite sum.  Theorem 1.2 from \cite{stuckervennosyoung} evaluates the Dedekind sum in finite terms, as follows.
For $\gamma = \begin{psmallmatrix} a & b\\ c & d  \end{psmallmatrix} \in \Gamma_0(q_1q_2)$ with $c>0$ and $\chi_1\chi_2(-1) = 1,$ we have
\begin{equation}
    \label{eq:dedsum}
    S_{\chi_1,\chi_2}(\gamma) = \sum_{j \bmod c} \sum_{n \bmod q_1} \overline{\chi_2}(j)\overline{\chi_1}(n)B_1\left(\frac{j}{c}\right)B_1\left(\frac{n}{q_1}+\frac{aj}{c}\right).
\end{equation} 
Since (\ref{eq:dedsum}) only depends on the first column of $\gamma$, we will often write $S_{\chi_1,\chi_2}(a,c)$ in place of $S_{\chi_1,\chi_2}(\gamma)$. 

It is easy to see from \eqref{eq:DedekindSumDefviaEisensteinSeries} that for $c =0$ we have $S_{\chi_1, \chi_2}(1,0) = 0$, and that $S_{\chi_1, \chi_2}(-a,-c) = S_{\chi_1, \chi_2}(a,c)$, which takes care of $c <0$.  It is also easy to see that for $c>0$, $S_{\chi_1, \chi_2}(a,c)$ is periodic in $a$ modulo $c$; this is obvious from \eqref{eq:dedsum} but has its origin from the fact that $f_{\chi_1, \chi_2}$ is periodic with period $1$.

The reciprocity formula for the classical Dedekind sum is one of its most interesting and important features.
The following reciprocity formula for $S_{\chi_1,\chi_2}$ is proved in \cite{stuckervennosyoung} via the action of the Fricke involution $\omega = \begin{psmallmatrix}0&-1\\q_1q_2&0 \end{psmallmatrix}$:
\begin{theorem}[Reciprocity formula \cite{stuckervennosyoung}]
\label{thm:reciprocity}
For $\gamma=\begin{psmallmatrix}a&b\\cq_1q_2&d \end{psmallmatrix} \in \Gamma_0(q_1q_2),$ let $\gamma' =\begin{psmallmatrix}d&-c\\-bq_1q_2&a \end{psmallmatrix} \in \Gamma_0(q_1q_2).$ If $\chi_1,\chi_2$ are even, then \begin{align}
    S_{\chi_1,\chi_2}(\gamma) =  S_{\chi_2,\chi_1}(\gamma').
    \label{eq:repeven}
\end{align} If $\chi_1,\chi_2$ are odd, then with  $\tau(\chi)$ denoting the standard Gauss sum,
we have
\begin{align}
    S_{\chi_1,\chi_2}(\gamma) =  -S_{\chi_2,\chi_1}(\gamma')+(1-\psi(\gamma))\left(\frac{\tau(\overline{\chi}_1)\tau(\overline{\chi}_2)}{(\pi i )^2}\right)L(1,\chi_1)L(1,\chi_2).
    \label{eq:repodd}
\end{align}
\end{theorem}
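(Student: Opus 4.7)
The plan is to exploit the Fricke involution $\omega=\begin{psmallmatrix}0&-1\\q_1q_2&0\end{psmallmatrix}$. A direct matrix multiplication yields $\omega\gamma=\gamma'\omega$; combined with the fact that $\omega^2=-q_1q_2\,I$ acts as the identity Möbius transformation on $\mathbb{H}$, this gives $\gamma\omega z=\omega\gamma' z$ as points of the upper half-plane. Substituting $z\mapsto\omega z$ in the defining identity \eqref{eq:DedekindSumDefviaEisensteinSeries} then converts $\gamma$ into $\gamma'$ inside the $f$-argument:
\begin{equation*}
\frac{\pi i}{\tau(\overline{\chi_1})}\,S_{\chi_1,\chi_2}(\gamma) \;=\; f_{\chi_1,\chi_2}(\omega\gamma' z) - \psi(\gamma)\,f_{\chi_1,\chi_2}(\omega z).
\end{equation*}

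The heart of the argument is a Fricke transformation formula for $f_{\chi_1,\chi_2}$ of the shape
\begin{equation*}
f_{\chi_1,\chi_2}(\omega z) = A\,f_{\chi_2,\chi_1}(z) + C_{\chi_1,\chi_2},
\end{equation*}
for explicit $A$ and $C_{\chi_1,\chi_2}$. The natural route uses the description of $f_{\chi_1,\chi_2}$ as an Eichler integral $\int_z^{i\infty}\mathcal{E}_{\chi_1,\chi_2}(w)\,dw$ of a weight-$2$ holomorphic Eisenstein series (see \cite[Section 5]{stuckervennosyoung}). The substitution $w=\omega v$, together with the weight-$2$ Fricke pseudo-eigenvalue that relates $\mathcal{E}_{\chi_1,\chi_2}$ to $\mathcal{E}_{\chi_2,\chi_1}$ up to an explicit constant, converts $\int_{\omega z}^{i\infty}\mathcal{E}_{\chi_1,\chi_2}(w)\,dw$ into a scalar multiple of $\int_z^0\mathcal{E}_{\chi_2,\chi_1}(v)\,dv$. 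Writing $\int_z^0=\int_z^{i\infty}-\int_0^{i\infty}$ separates the Eichler integral $f_{\chi_2,\chi_1}(z)$ from the period $\int_0^{i\infty}\mathcal{E}_{\chi_2,\chi_1}(v)\,dv$. By Mellin transform this period vanishes in the even case and equals an explicit Gauss-sum multiple of $L(1,\chi_1)L(1,\chi_2)$ in the odd case; matching with the target formula identifies $A=\pm\tau(\overline{\chi_2})/\tau(\overline{\chi_1})$ (sign by parity) and $C_{\chi_1,\chi_2}=\frac{\tau(\overline{\chi_2})}{\pi i}L(1,\chi_1)L(1,\chi_2)$ in the odd case.

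Substituting this Fricke formula back into the earlier expression and using the identity $\psi(\gamma)=\psi'(\gamma')$ with $\psi'=\chi_2\overline{\chi_1}$---which follows from $ad\equiv 1\pmod{q_1q_2}$ forcing $\chi_j(a)=\overline{\chi_j(d)}$ for $j=1,2$---the $A$-piece collects into $A\cdot\frac{\pi i}{\tau(\overline{\chi_2})}S_{\chi_2,\chi_1}(\gamma')$ and the $C$-piece becomes the coboundary $(1-\psi(\gamma))\,C_{\chi_1,\chi_2}$. Rearranging reproduces \eqref{eq:repeven} for even $\chi_1\chi_2$ and \eqref{eq:repodd} for odd $\chi_1\chi_2$. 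The principal obstacle is the explicit evaluation of the period integral $\int_0^{i\infty}\mathcal{E}_{\chi_2,\chi_1}(v)\,dv$: one must Mellin-expand the Fourier series of $\mathcal{E}_{\chi_2,\chi_1}$, handle the parity-dependent vanishing (arising from $L(0,\chi)=0$ for even $\chi$), and invoke the functional equation of Dirichlet $L$-functions to convert the $L(1,\chi_j)L(0,\chi_{3-j})$ values that appear naturally into the symmetric form $L(1,\chi_1)L(1,\chi_2)$ demanded by \eqref{eq:repodd}, while correctly tracking all Gauss-sum factors.
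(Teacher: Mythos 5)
The paper does not actually prove Theorem \ref{thm:reciprocity} --- it imports it from \cite{stuckervennosyoung}, noting only that it is proved there ``via the action of the Fricke involution'' --- and your sketch follows exactly that route: the intertwining $\omega\gamma=\gamma'\omega$, the Fricke transformation of $f_{\chi_1,\chi_2}$ obtained from its Eichler-integral description, and the period $\int_0^{i\infty}$ producing the parity-dependent $L$-value term all check out, including the bookkeeping $\psi(\gamma)=\chi_2\overline{\chi_1}(a)$ and the constants $A=\pm\tau(\overline{\chi_2})/\tau(\overline{\chi_1})$, $C_{\chi_1,\chi_2}=\tfrac{\tau(\overline{\chi_2})}{\pi i}L(1,\chi_1)L(1,\chi_2)$. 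So your proposal is correct in outline and takes essentially the same approach as the cited source; the only work left implicit is the explicit Fricke pseudo-eigenvalue and period evaluation, which you correctly identify as the computational core.
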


Our main interest in this paper is to understand the structure of the kernels of the Dedekind sums.
To make our objects of interest more precise, we make the following definition. 
\begin{definition}
 Let $\chi_1$ and $\chi_2$ be non-trivial primitive Dirichlet characters modulo $q_1$ and $q_2$, respectively, with $q_1,q_2 > 1.$ Then we denote the kernel associated to $\chi_1,\chi_2$, by
\begin{align*}
    K_{\chi_1,\chi_2} &= \ker(S_{\chi_1, \chi_2}) = \left\{\gamma \in \Gamma_0(q_1q_2): S_{\chi_1,\chi_2}(\gamma) = 0 \right\}.
\end{align*} We let $K_{\chi_1,\chi_2}^1$ denote $K_{\chi_1,\chi_2} \bigcap \Gamma_1(q_1q_2)$.
Moreover, we define
$$ K_{q_1,q_2} = \bigcap_{\substack{\chi_1, \chi_2\\ \chi_1 \chi_2(-1)=1}} K_{\chi_1,\chi_2},
$$ 
where $\chi_i$ runs over primitive characters modulo $q_i$, $i=1,2$.
We similarly let $K_{q_1,q_2}^1= K_{q_1,q_2} \bigcap \Gamma_1(q_1q_2)$.
\label{def:ker}
\end{definition}
It is easy to see that $K_{\chi_1, \chi_2}$ (resp. $K_{\chi_1, \chi_2}^1$) is a subgroup of $\Gamma_0(q_1 q_2)$ (resp. $\Gamma_1(q_1 q_2)$).  Given any group homomorphism, it is a fundamental question to study its kernel.  In this context, there is an additional curiosity which is that
\begin{equation}
\gamma \in K_{\chi_1, \chi_2} \quad \Longleftrightarrow \quad f_{\chi_1, \chi_2}(\gamma z) = \psi(\gamma) f_{\chi_1, \chi_2}(z).
\end{equation}
In words, the elements of $K_{\chi_1, \chi_2}$ are precisely those for which $f_{\chi_1, \chi_2}$ transforms like an automorphic form.  It is well-known that there are no weight $0$ holomorphic modular forms, and the size of $K_{\chi_1, \chi_2} \backslash \Gamma_0(q_1 q_2)$ may, in some loose sense, be interpreted to measure the failure of $f_{\chi_1, \chi_2}$ to be modular.

\begin{remark}
\label{remark:Gamma1andGamma}
One can similarly consider $K_{\chi_1, \chi_2} \bigcap \Gamma(q_1 q_2)$, but since $S_{\chi_1, \chi_2}$ 
depends only on the first column of $\gamma$, this is essentially the same as $K_{\chi_1, \chi_2}^1$. A more precise statement is that $\Gamma_1(q_1 q_2) = \bigcup_{b} \Gamma(q_1 q_2) (\begin{smallmatrix} 1 & b \\ 0 & 1 \end{smallmatrix})$, and $(\begin{smallmatrix} 1 & b \\ 0 & 1 \end{smallmatrix})$ is trivially in the kernel of any Dedekind sum.
\end{remark}

The following theorem of Dillon and Gaston \cite{dillongaston} shows that $S_{\chi_1, \chi_2}$ is non-trivial in a strong sense:
\begin{theorem}[Strong nontriviality \cite{dillongaston}]
\label{thm:DillonGaston} For each $c > 0$ such that $q_1 q_2 | c$, there exists $a \in \ZZ$  so that $S_{\chi_1, \chi_2}(a,c) \neq 0$.
\end{theorem}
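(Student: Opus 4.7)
The plan is to argue by contradiction: suppose, for some fixed $c$ with $q_1 q_2 \mid c$, that $S_{\chi_1,\chi_2}(a,c) = 0$ for every integer $a$ coprime to $c$. Then for every Dirichlet character $\chi$ modulo $c$ the character-weighted sum
\[
T(\chi) \;:=\; \sum_{\substack{a \bmod c \\ (a,c)=1}} \chi(a)\, S_{\chi_1,\chi_2}(a,c)
\]
would vanish. The strategy is to choose $\chi$ carefully, compute $T(\chi)$ explicitly from the finite formula \eqref{eq:dedsum}, and identify the resulting expression, up to an explicit nonzero factor of Gauss sums, with a value $L(1,\chi')$ of a Dirichlet $L$-function for some nontrivial character $\chi'$. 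Since $L(1,\chi')\neq 0$ by Dirichlet's theorem, this will contradict $T(\chi)=0$.

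To carry out the evaluation, I would substitute \eqref{eq:dedsum} into $T(\chi)$ and swap the order of summation. When $(j,c)=1$, the change of variables $a \mapsto a j^{-1}\pmod c$ in the inner sum decouples $j$ from $a$, yielding
\[
\sum_{a}^{*} \chi(a)\, B_1\!\Big(\tfrac{n}{q_1}+\tfrac{aj}{c}\Big)
\;=\; \overline{\chi}(j)\sum_{a}^{*}\chi(a)\, B_1\!\Big(\tfrac{n}{q_1}+\tfrac{a}{c}\Big).
\]
The Fourier expansion $B_1(x) = -\sum_{k\neq 0}\tfrac{e(kx)}{2\pi i k}$ converts the inner $a$-sum into a Gauss sum of $\chi$ times a Dirichlet series in $k$, while the outer $j$-sum becomes a character sum of $\overline{\chi_2}\,\overline{\chi}$ weighted by $B_1(j/c)$. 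Primitivity of $\chi_1,\chi_2$ keeps the Gauss sums nonzero, and the Dirichlet series in $k$ should collapse to $L(1,\chi')$ for an explicit nontrivial character $\chi'$ built from $\chi,\chi_1,\chi_2$.

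The main obstacle will be the $(j,c)>1$ portion of $T(\chi)$: the weight $\overline{\chi_2}(j)$ only forces $(j,q_2)=1$, whereas $c$ may be strictly larger than $q_1q_2$, so these terms are not automatically killed. I expect to handle this either by taking $\chi$ to be primitive of conductor equal to a large enough divisor of $c$ so that orthogonality over $a$ annihilates the cross-terms, or by a M\"obius inversion on the divisors of $c/(q_1q_2)$ which reduces inductively to the base case $c=q_1 q_2$. The base case can then be attacked directly via the reciprocity formula (Theorem~\ref{thm:reciprocity}) applied to matrices with bottom-left entry $q_1 q_2$, where the dual matrix $\gamma'$ has small second column and easily controlled Dedekind sum, leaving only the explicit $L(1,\chi_1)L(1,\chi_2)$ correction term in the odd case (or a direct Gauss-sum calculation in the even case) to analyse.
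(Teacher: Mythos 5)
The statement you are proving is not proved in this paper at all: it is quoted from Dillon--Gaston \cite{dillongaston}, so there is no internal proof to compare against. Your averaging strategy --- twisting $S_{\chi_1,\chi_2}(a,c)$ by a character in $a$, opening up \eqref{eq:dedsum}, and recognizing the result as an explicit nonzero combination of Gauss sums and $L(1,\cdot)$ values --- is in fact the strategy of the cited reference, whose main theorem is precisely an evaluation of such a twisted average. So the plan is pointed in the right direction, but as written it has several genuine gaps, and the fallback options you offer for the hard steps do not work.

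First, the ``careful choice'' of $\chi$ is not optional and is never pinned down: by \eqref{eq:a_c-a} one has $T(\chi) = -\chi(-1)\chi_2(-1)\,T(\chi)$, so $T(\chi)$ vanishes \emph{identically} for every $\chi$ with $\chi(-1)=\chi_2(-1)$, and the contradiction can only come from a character of the opposite parity; this parity constraint is also exactly what makes the two $B_1$-character-sums (which vanish for even twists regardless of $L$-values) nonzero, and it is what forces $\chi_1\overline{\chi}$ and $\chi_2\chi$ to be nontrivial. Second, your two proposed fixes for the $(j,c)>1$ terms are both problematic: a primitive character modulo $c$ need not exist (e.g.\ $c\equiv 2 \pmod 4$, which occurs for $q_1=3$, $q_2=5$, $c=30$), and the M\"obius/induction reduction to $c=q_1q_2$ has no visible mechanism, since $S_{\chi_1,\chi_2}(a,c)$ for different $c$ are not related by divisor sums (the only such relation in the paper, Proposition \ref{prop:mulsq}, goes in the wrong direction: it propagates \emph{vanishing} upward). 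Third, the base-case argument via Theorem \ref{thm:reciprocity} does not close: in the even case \eqref{eq:repeven} contains no $L$-value term at all; in the odd case the $L(1,\chi_1)L(1,\chi_2)$ term carries the factor $1-\psi(\gamma)$, which vanishes whenever $\psi(\gamma)=1$; and the dual matrix $\gamma'$ has lower-left entry $-bq_1q_2$ with $b$ of uncontrolled size, so $S_{\chi_2,\chi_1}(\gamma')$ is not ``easily controlled'' --- indeed the one case where it is ($b=0$, forcing $a=d=\pm1$) yields Proposition \ref{prop:a1value}, i.e.\ a point of the \emph{kernel}, the opposite of what you need. To repair the proof you should commit to the twisted average with a character of parity $-\chi_1(-1)$, carry out the Fourier expansion of $B_1$ with the imprimitive Euler factors tracked explicitly, and verify nonvanishing of every factor; that is the content of the Dillon--Gaston computation.
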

\begin{remark} One way to interpret this result of Dillon and Gaston is that it shows that $K_{\chi_1, \chi_2}$ is not ``too big" (keeping account of the size of $c$, the lower-left entry of elements of $\Gamma_0(q_1 q_2)$).  
\end{remark}
\begin{remark}
The Eichler-Shimura isomorphism can be used to show that $S_{\chi_1, \chi_2}$ is non-trivial,  but does not obviously imply the strong non-triviality.
\end{remark}

Our first main result shows that $K_{q_1, q_2}^1$ is not ``too small."
\begin{theorem}[Kernel is strongly nontrivial]
\label{thm:kernelisbig}
For every $c \in \ZZ$, there exists $\gamma  = \begin{psmallmatrix}a&b\\cq_1q_2&d  \end{psmallmatrix} \in \Gamma(q_1q_2)$ such that $\gamma \in K_{q_1,q_2}^1$.
\label{cor:nonempty}
\end{theorem}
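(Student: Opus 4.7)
The plan is to exhibit the element explicitly: set $U = \begin{psmallmatrix}1 & 0\\ q_1 q_2 & 1\end{psmallmatrix}$, so $U \in \Gamma(q_1 q_2)$, and an easy induction gives $U^c = \begin{psmallmatrix}1 & 0\\ cq_1 q_2 & 1\end{psmallmatrix} \in \Gamma(q_1 q_2)$ for every $c \in \ZZ$; this $U^c$ has the first column shape required by the theorem. Since $U \in \Gamma_1(q_1 q_2)$, we have $\psi(U)=1$, so the crossed-homomorphism identity \eqref{eq:hom} degenerates to ordinary additivity $S_{\chi_1,\chi_2}(U^c) = c\, S_{\chi_1,\chi_2}(U)$. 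The theorem therefore reduces to the single key claim that $S_{\chi_1,\chi_2}(U) = S_{\chi_1,\chi_2}(1,q_1q_2) = 0$ for every admissible pair $(\chi_1,\chi_2)$.

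To establish this vanishing, I would apply the closed formula \eqref{eq:dedsum} at $(a,c)=(1,q_1q_2)$ and expand both $B_1$-factors using $B_1(x) = -\frac{1}{2\pi i}\sum_{k\neq 0} e^{2\pi i k x}/k$. After interchanging summation and invoking the primitive Gauss-sum identity $\sum_{n\bmod q}\overline{\chi}(n) e^{2\pi i kn/q} = \chi(k)\tau(\overline{\chi})$ for $\gcd(k,q)=1$ (and $0$ otherwise) separately for each of $\chi_1$ and $\chi_2$, the finite double sum transforms into
\[
S_{\chi_1,\chi_2}(U) = C\cdot\sum_{\substack{m\neq 0\\ \gcd(m,q_2)=1}} \chi_2(m) \sum_{\substack{k\neq 0\\ \gcd(k,q_1)=1}} \frac{\chi_1(k)}{k(q_1 m - k)},
\]
for a nonzero constant $C$ involving $\tau(\overline{\chi_1})\tau(\overline{\chi_2})$. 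Partial fractions write the inner sum as $\frac{1}{q_1 m}\sum_k \chi_1(k)\bigl(\frac{1}{k}+\frac{1}{q_1 m - k}\bigr)$, and the substitution $k\mapsto q_1 m - k$ (which preserves the index set since $q_1\mid q_1m$) turns the second summand into $\chi_1(-1)$ times the first. The inner sum therefore equals $\frac{1+\chi_1(-1)}{q_1 m}\sum_k \chi_1(k)/k$. When $\chi_1$ is odd the prefactor $1+\chi_1(-1)$ vanishes; when $\chi_1$ is even, the further substitution $k\mapsto -k$ on the index set gives $\sum_k \chi_1(k)/k = -\chi_1(-1)\sum_k \chi_1(k)/k$, which forces this sum to equal $0$ because $1+\chi_1(-1)=2$. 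In either case $S_{\chi_1,\chi_2}(U)=0$, so $U^c \in K_{q_1,q_2}^1$ for every $c$.

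The main obstacle is the rigorous treatment of the interchanges of summation, since the Fourier series for $B_1$ converges only conditionally. I would handle this by working with smooth truncations or symmetric partial sums and passing to the limit, or equivalently via Abel summation. An alternative route avoiding these convergence questions entirely is to interpret $S_{\chi_1,\chi_2}(U)$ as the constant term of $f_{\chi_1,\chi_2}$ at the cusp $0$ (since $U$ is the parabolic generator of the $\Gamma(q_1q_2)$-stabilizer of $0$), relate it via the Fricke involution to the constant term at $\infty$ of the character-swapped Eisenstein series $E_{\chi_2,\chi_1}(\cdot,1)$, and invoke \eqref{eq:EisensteinFourierExpansion} to see that the latter vanishes whenever $q_1,q_2>1$.
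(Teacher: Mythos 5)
Your proof is correct, and its skeleton is identical to the paper's: exhibit $U=(\begin{smallmatrix}1&0\\q_1q_2&1\end{smallmatrix})$, note $U^c=(\begin{smallmatrix}1&0\\cq_1q_2&1\end{smallmatrix})$ with $S_{\chi_1,\chi_2}(U^c)=c\,S_{\chi_1,\chi_2}(U)$ (the paper packages this step as Proposition \ref{prop:mulsq} with $n=0$, $d=1$, via $\gamma^k=I+kQA$ for a nilpotent $A$), and reduce everything to the single vanishing $S_{\chi_1,\chi_2}(1,q_1q_2)=0$. Where you genuinely diverge is in how that key vanishing is proved. The paper (Proposition \ref{prop:a1value}) gets it in one line from the reciprocity formula, Theorem \ref{thm:reciprocity}: for $\gamma=(\begin{smallmatrix}1&0\\q_1q_2&1\end{smallmatrix})$ one has $\gamma'=(\begin{smallmatrix}1&-1\\0&1\end{smallmatrix})$, whose Dedekind sum is trivially zero, and the extra $L$-value term in the odd case dies because $1-\psi(\gamma)=0$. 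You instead attack the finite formula \eqref{eq:dedsum} directly with the Fourier expansion of $B_1$ and Gauss sums; the resulting double sum and the parity argument killing $\sum_k\chi_1(k)/k$ do come out to zero in both parities, so the computation is sound, but it carries a real (if standard) burden of justifying interchanges of conditionally convergent sums, which you rightly flag. The trade-off: your route is self-contained at the level of the explicit formula and does not invoke reciprocity, whereas the paper's is essentially free given the quoted Theorem \ref{thm:reciprocity}. Note also that your ``alternative route'' via the Fricke involution and constant terms at the cusp $0$ is not really an independent third proof --- it is in essence a sketch of how the reciprocity formula itself is established in \cite{stuckervennosyoung} --- so if you are willing to cite reciprocity you may as well use it in the one-line form the paper does.
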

Although Theorem \ref{thm:kernelisbig} shows that the kernel of Dedekind sums is not too small, this must be balanced against the following:
\begin{proposition}
\label{prop:indexisinfinite}
The index of $K_{\chi_1, \chi_2}^1$ in $\Gamma_1(q_1 q_2)$ is infinite.
\end{proposition}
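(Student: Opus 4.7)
The plan is to reduce to showing that $S_{\chi_1,\chi_2}$ does not vanish identically on $\Gamma_1(q_1 q_2)$.  Since $\psi \equiv 1$ on $\Gamma_1(q_1 q_2)$, the restriction $S_{\chi_1,\chi_2}|_{\Gamma_1(q_1 q_2)}$ is an ordinary group homomorphism into the torsion-free abelian group $(\CC,+)$; thus the quotient $\Gamma_1(q_1 q_2)/K_{\chi_1, \chi_2}^1$ embeds into $(\CC,+)$ and is torsion-free, and any finite torsion-free abelian group is trivial.  Hence it suffices to exhibit a single $\gamma \in \Gamma_1(q_1 q_2)$ with $S_{\chi_1, \chi_2}(\gamma) \neq 0$.

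Suppose for contradiction that $S_{\chi_1,\chi_2}$ vanishes identically on $\Gamma_1(q_1 q_2)$.  The crossed-homomorphism identity \eqref{eq:hom} gives $S_{\chi_1,\chi_2}(\gamma \delta) = S_{\chi_1,\chi_2}(\gamma)$ for any $\delta \in \Gamma_1(q_1 q_2)$, so $S_{\chi_1,\chi_2}$ descends to a $1$-cocycle $\bar S$ on the finite quotient $\Gamma_0(q_1 q_2)/\Gamma_1(q_1 q_2) \cong (\ZZ/q_1 q_2\ZZ)^\times$, where the quotient acts on $\CC$ via $\psi$.  Because $\CC$ is a $\QQ$-vector space (so uniquely divisible), $H^1((\ZZ/q_1 q_2\ZZ)^\times, \CC_\psi) = 0$ and $\bar S$ must be a coboundary: there exists $C \in \CC$ with
\[
S_{\chi_1, \chi_2}(\gamma) = (\psi(\gamma)-1)\,C \qquad \text{for all } \gamma \in \Gamma_0(q_1 q_2).
\]
When $\psi$ is trivial, the right-hand side is identically zero, so $S_{\chi_1, \chi_2} \equiv 0$ on all of $\Gamma_0(q_1 q_2)$, directly contradicting Theorem~\ref{thm:DillonGaston}.

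When $\psi$ is nontrivial, substituting the coboundary formula into \eqref{eq:DedekindSumDefviaEisensteinSeries} and setting $C' = (\pi i/\tau(\overline{\chi_1}))\,C$ rearranges to
\[
g(\gamma z) = \psi(\gamma)\, g(z) \qquad \text{for all } \gamma \in \Gamma_0(q_1 q_2),
\]
where $g(z) := f_{\chi_1, \chi_2}(z) + C'$.  So $g$ is holomorphic on $\mathbb{H}$, tends to $C'$ at $i\infty$, and transforms as a weight-$0$ form with character $\psi$.  Using the decomposition \eqref{eq:fintermsofE}, the analyticity of $E_{\chi_1,\chi_2}(z,s)$ at $s=1$, and the symmetric analysis for $f_{\overline{\chi_1},\overline{\chi_2}}$ (whose Dedekind sum $S_{\overline{\chi_1},\overline{\chi_2}} = \overline{S_{\chi_1,\chi_2}}$ also vanishes on $\Gamma_1(q_1 q_2)$ by hypothesis), one controls the growth of $g$ at every cusp of $X_0(q_1 q_2)$ and realizes it as a bounded holomorphic section of the flat line bundle $\mathcal{L}_\psi$ of character $\psi$ on $X_0(q_1 q_2)$.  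Since $\psi$ is nontrivial on $\Gamma_0(q_1q_2)$, any such global section must vanish: either $\mathcal{L}_\psi$ is a nontrivial degree-$0$ line bundle (with no nonzero holomorphic sections on the compact surface), or $\mathcal{L}_\psi$ is trivial but a constant section $g \equiv g_0$ would force $g_0(1 - \psi(\gamma)) = 0$ for all $\gamma \in \Gamma_0(q_1q_2)$, which still forces $g_0 = 0$.  In either case $g \equiv 0$ and hence $f_{\chi_1,\chi_2} \equiv -C'$ is constant; the vanishing at $i\infty$ then gives $C' = 0$ and $f_{\chi_1,\chi_2} \equiv 0$, contradicting $\lambda_{\chi_1,\chi_2}(1, 1) = 1 \neq 0$ in \eqref{eq:fchi1chi2FourierExpansion}.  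The principal technical hurdle is the cusp-by-cusp verification of boundedness of $g$ at the cusps of $X_0(q_1 q_2)$ lying outside the $\Gamma_0(q_1 q_2)$-orbit of $i\infty$, where the explicit Fourier expansions of $E_{\chi_1,\chi_2}$ at the various cusps, combined with \eqref{eq:fintermsofE}, will be essential.
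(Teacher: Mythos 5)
Your opening paragraph is precisely the paper's entire proof: by the first isomorphism theorem the quotient $K_{\chi_1,\chi_2}^1\backslash\Gamma_1(q_1q_2)$ embeds into the torsion-free group $(\CC,+)$, so it is either trivial or infinite. The paper stops there, implicitly taking for granted that $S_{\chi_1,\chi_2}$ does not vanish identically on $\Gamma_1(q_1q_2)$; you correctly isolate this non-vanishing as the remaining content and attack it head on. Your cohomological reduction is sound: if $S_{\chi_1,\chi_2}$ vanished on $\Gamma_1(q_1q_2)$, then by \eqref{eq:hom} it would descend to a $1$-cocycle on the finite group $(\ZZ/q_1q_2\ZZ)^\times$ with values in the uniquely divisible module $\CC_\psi$, hence would be a coboundary $S_{\chi_1,\chi_2}(\gamma)=(\psi(\gamma)-1)C$. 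When $\psi$ is trivial this contradicts Theorem \ref{thm:DillonGaston}, as you note. In fact the case of even characters also closes immediately and elementarily: since $\psi(-1)=\chi_1\chi_2(-1)=1$, the coboundary form gives $S_{\chi_1,\chi_2}(-a,c)=S_{\chi_1,\chi_2}(a,c)$, while \eqref{eq:a_c-a} gives $S_{\chi_1,\chi_2}(-a,c)=-\chi_2(-1)S_{\chi_1,\chi_2}(a,c)$; for $\chi_2$ even this forces $S_{\chi_1,\chi_2}\equiv 0$, again contradicting Theorem \ref{thm:DillonGaston}.

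The genuine gap is the remaining case ($\psi$ nontrivial with $\chi_1,\chi_2$ odd). Your argument there rests on the claim that $g=f_{\chi_1,\chi_2}+C'$ is bounded at every cusp of $X_0(q_1q_2)$, and you explicitly defer exactly this step as ``the principal technical hurdle.'' This is not a routine verification: the level-one analogue of $f_{\chi_1,\chi_2}$ (essentially $\log\eta$) grows linearly at the cusp, the constant term of $E_{\chi_1,\chi_2}(z,s)$ at cusps other than $0$ and $\infty$ generically contains a $y^{s}$ term at $s=1$, and isolating the holomorphic piece of \eqref{eq:fintermsofE} at such a cusp requires the full Fourier expansion of the Eisenstein series there --- input that appears nowhere in your argument or in the paper. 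Without it, the maximum-modulus/line-bundle step has nothing to act on. Nor can reciprocity \eqref{eq:repodd} rescue the odd case: its extra term is itself of the shape $(1-\psi(\gamma))(\cdots)$, so the coboundary hypothesis propagates consistently to $S_{\chi_2,\chi_1}$ rather than producing a contradiction. So, as written, your proof of non-vanishing on $\Gamma_1(q_1q_2)$ is incomplete in precisely the one case the elementary identities do not already handle, and the proposition is not yet fully proved by your route.
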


Next we discuss some relationships between commutator subgroups and kernels of the newform Dedekind sums. We begin with a general discussion.  If $G$ is a group, we let $[G,G]$ denote its commutator subgroup (i.e., the smallest subgroup of $G$ containing all commutators $xyx^{-1} y^{-1}$ with $x,y \in G$).
It is well-known (and easy to check) that if $\varphi: G \rightarrow H$ is a group homomorphism, with $H$ abelian, then $[G,G] \subseteq \ker(\varphi)$.  We also recall that the abelianization of a group, denoted $G^{\text{ab}}$ is defined by $G^{\text{ab}} = [G,G] \backslash G$.  It is known that the abelianization of $SL_2(\ZZ)$ is $\ZZ/12\ZZ$, which implies that there are no non-trivial group homomorphisms from $SL_2(\ZZ)$ to $\CC$.  Theorem \ref{thm:DillonGaston} is in sharp contrast to the level $1$ case.

One naturally is led to wonder to what extent the commutator subgroups of $\Gamma_0(q_1 q_2)$, $\Gamma_1(q_1 q_2)$, etc. account for the kernels of the Dedekind sums.  Our second main result shows that $K_{q_1, q_2}^1$ is much larger than the commutator subgroup of $\Gamma(q_1 q_2)$ (cf. Remark \ref{remark:Gamma1andGamma}).
\begin{theorem}
\label{thm:commutator}
We have $[\Gamma(q_1 q_2), \Gamma(q_1 q_2)] \subsetneq K_{q_1, q_2}^1$.
\end{theorem}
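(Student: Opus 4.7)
I would prove both the inclusion and the strictness. For the inclusion, invoke the remark immediately following (\ref{eq:hom}): each Dedekind sum $S_{\chi_1,\chi_2}$ restricts to a genuine group homomorphism $\Gamma_1(q_1 q_2) \to \CC$ into an abelian target (since $\psi \equiv 1$ on $\Gamma_1(q_1 q_2)$). Because $\Gamma(q_1 q_2) \subseteq \Gamma_1(q_1 q_2)$, every commutator in $\Gamma(q_1 q_2)$ is killed by each such homomorphism, and intersecting over all admissible pairs $(\chi_1, \chi_2)$ yields $[\Gamma(q_1 q_2), \Gamma(q_1 q_2)] \subseteq K_{q_1, q_2}^1$.

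For strictness, I would exhibit the explicit element $\gamma = \begin{psmallmatrix} 1 & q_1 q_2 \\ 0 & 1 \end{psmallmatrix}$. It lies in $\Gamma(q_1 q_2)$ by inspection, and since its lower-left entry is $0$ we have $S_{\chi_1,\chi_2}(\gamma) = S_{\chi_1,\chi_2}(1,0) = 0$ for every pair, as noted just after (\ref{eq:dedsum}). Hence $\gamma \in K_{q_1, q_2}^1$, and the whole task reduces to showing $\gamma \notin [\Gamma(q_1 q_2), \Gamma(q_1 q_2)]$.

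To do so, set $N = q_1 q_2$; the existence of non-trivial primitive characters forces $q_1, q_2 \geq 3$, so $N \geq 9$ and $\Gamma(N)$ is torsion-free. Thus $\Gamma(N)$ acts freely on $\mathbb{H}$ and the open modular curve $Y(N) = \Gamma(N) \backslash \mathbb{H}$ satisfies $\pi_1(Y(N)) \cong \Gamma(N)$, giving $\Gamma(N)^{\mathrm{ab}} \cong H_1(Y(N), \ZZ)$. The element $\gamma = T^N$ represents a small loop around the cusp at $\infty$ in $Y(N)$. By standard topology of a non-compact orientable Riemann surface of genus $g$ with $c$ punctures, $H_1 \cong \ZZ^{2g + c - 1}$, and the classes of small loops around cusps span a rank-$(c-1)$ subgroup whose only relation is that their signed sum vanishes. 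Since $X(N)$ has $\tfrac{N^2}{2}\prod_{p \mid N}(1 - p^{-2}) \geq 4$ cusps for $N \geq 3$, the class $[T^N]$ is non-zero in $H_1(Y(N), \ZZ)$, i.e.\ $T^N \notin [\Gamma(N), \Gamma(N)]$.

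The main technical ingredient is invoking correctly the topological description of $H_1(Y(N), \ZZ)$ — rank $2g + c - 1$, with the single cusp relation $\sum_j [l_j] = 0$; once this is in hand, non-triviality of $[T^N]$ reduces to the easy observation that $X(N)$ has more than one cusp. A more algebraic alternative would construct a homomorphism $\Gamma(N) \to \CC$ from the transformation $\eta(z+N) = e^{i \pi N/12}\eta(z)$ that is non-vanishing on $T^N$, but the topological framing is cleaner and avoids branch-choice subtleties in $\log \eta$.
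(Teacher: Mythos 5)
Your proof is correct, but the strictness half proceeds by a genuinely different route from the paper's. The inclusion $[\Gamma(q_1 q_2), \Gamma(q_1 q_2)] \subseteq K_{q_1,q_2}^1$ is handled identically in both: the restriction of each crossed homomorphism to $\Gamma_1(q_1 q_2)$ is an honest homomorphism into the abelian group $\CC$, so all commutators die. For strictness, the paper proves the purely algebraic Proposition \ref{prop:comm}: the map $A \mapsto (A-I)/Q \bmod Q$ is a homomorphism from $\Gamma(Q)$ into the abelian group $M_{2\times 2}(\ZZ/Q\ZZ)$, whence $[\Gamma(Q),\Gamma(Q)] \subseteq \Gamma(Q^2)$; any kernel element not in $\Gamma(q_1^2 q_2^2)$ (the paper uses $\begin{psmallmatrix} 1 & 0 \\ q_1 q_2 & 1\end{psmallmatrix}$ from Corollary \ref{cor:a1}) then witnesses strictness. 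You instead certify the single element $T^{N}$, $N = q_1 q_2$, as lying outside the commutator subgroup by identifying $\Gamma(N)^{\mathrm{ab}}$ with $H_1(Y(N),\ZZ)$ and observing that a loop around one of the (at least four) cusps is nonzero in homology. Your topological argument is sound — you correctly note that $N \geq 9$ forces $\Gamma(N)$ torsion-free with $-I \notin \Gamma(N)$, that the stabilizer of $\infty$ is generated by $T^N$, and that the cusp classes satisfy only the single sum relation — but it is heavier machinery than needed: your own witness $T^N$ visibly fails to lie in $\Gamma(N^2)$, so the two-line congruence argument of Proposition \ref{prop:comm} would finish the job. What the paper's route buys is the stronger quantitative containment $[\Gamma(N),\Gamma(N)] \subseteq \Gamma(N^2)$, which is what justifies the remark that $K_{q_1,q_2}^1$ is \emph{much} larger than the commutator subgroup; what your route buys is independence from any such congruence bound, and it shows moreover that $[T^N]$ generates an infinite cyclic direct summand of the abelianization, not merely a nontrivial class.
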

\begin{remark}
In fact, we show in Proposition \ref{prop:comm} below that $[\Gamma(q_1 q_2), \Gamma(q_1 q_2)] \subseteq \Gamma(q_1^2 q_2^2)$.  In contrast, Theorem 
\ref{thm:kernelisbig} produces elements that are clearly not in $\Gamma(q_1^2 q_2^2)$ (indeed, there is no restriction on the lower-left entry besides divisibility by $q_1 q_2$).  This explains why we stated that $K_{q_1, q_2}^1$ is \emph{much} larger than the commutator subgroup.
\end{remark}

Our final main observation is that there exists a natural Galois action on the Dedekind sums, which can easily be read off from \eqref{eq:dedsum}.  This is discussed in Section \ref{section:Galois}.

\section{Numerical data and proof of Theorem \ref{thm:kernelisbig}}
We begin this section by presenting some numerical calculations of $K_{q_1,q_2}$. We let $(a,c)$ represent the left column of $\gamma \in \Gamma_0(q_1q_2)$. By (3), it can be shown that $S_{\chi_1,\chi_2}(a,c) = S_{\chi_1,\chi_2}(b,c)$ where $a \equiv b \bmod c$. Therefore, we only need to examine the pairs $(a,c)$ such that $a \in \{1,...,c-1\}.$ Using SageMath \cite{sage}, for all primes $3 \leq q_1,q_2 \leq 11$, we computed the elements of $K_{\chi_1,\chi_2}, K_{\chi_1,\chi_2}^1,K_{q_1,q_2}$, and $K_{q_1,q_2}^1$ with $1 \leq c \leq 10q_1q_2$, directly using the finite sum formula \eqref{eq:dedsum}. Consider the example in Figure \ref{subfig:kint55} where $q_1=q_2=5$ in which we display the elements of $K_{5,5}$ for $1\leq c\leq 250.$
\begin{figure}[H]
    \centering
    \subfloat[$K_{5,5}$]{{\includegraphics[scale=.4]{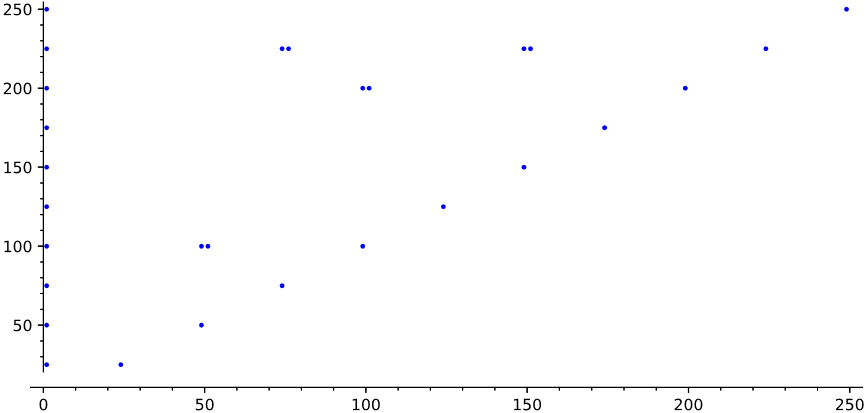}}
    \label{subfig:kint55}}
    \hfill
    \subfloat[$K_{7,3}$]{{\includegraphics[scale=.4]{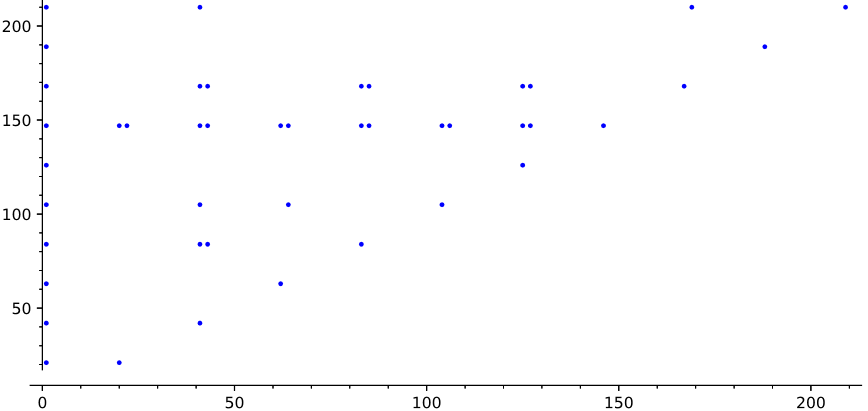}}
    \label{subfig:kint73}}\hfill
    \caption{$K_{q_1,q_2}$ for $1\leq c\leq 10q_1q_2$}
    \label{fig:kerint}
\end{figure}
From Figure \ref{subfig:kint55}, Figure \ref{subfig:kint73}, and other similar graphs, we found the vertical line formed when $a=1$ to consistently appear.  We prove this in Corollary \ref{cor:a1}.  We also found other lines corresponding to similar patterns shown in the following propositions. 
\begin{proposition}
Let $\chi_1$ and $\chi_2$ be non-trivial primitive Dirichlet characters modulo $q_1$ and $q_2$, respectively, with $q_1,q_2 > 1$. Then $S_{\chi_1,\chi_2}(1,q_1q_2) = 0$.
\label{prop:a1value}
\end{proposition}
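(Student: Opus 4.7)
My strategy is to avoid manipulating the double sum \eqref{eq:dedsum} directly and instead deduce the vanishing from the reciprocity formula (Theorem \ref{thm:reciprocity}). The guiding observation is that the pair $(1, q_1q_2)$ is the first column of a matrix $\gamma \in \Gamma_0(q_1 q_2)$ whose reciprocal $\gamma'$ has lower-left entry $0$, so that $S_{\chi_2,\chi_1}(\gamma') = S_{\chi_2,\chi_1}(1,0) = 0$ by the convention recorded just after \eqref{eq:dedsum}. This collapses the right-hand side of reciprocity almost entirely.

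Concretely, I would take the lower-triangular representative
\[
\gamma = \begin{pmatrix} 1 & 0 \\ q_1 q_2 & 1 \end{pmatrix} \in \Gamma_0(q_1 q_2),
\]
which in the notation of Theorem \ref{thm:reciprocity} corresponds to $a = d = 1$, $b = 0$, $c = 1$. A direct computation gives $\gamma' = \begin{pmatrix} 1 & -1 \\ 0 & 1 \end{pmatrix}$, whose first column is $(1,0)$. In the even-character case, \eqref{eq:repeven} then yields
\[
S_{\chi_1,\chi_2}(1, q_1 q_2) = S_{\chi_1,\chi_2}(\gamma) = S_{\chi_2,\chi_1}(\gamma') = 0
\]
at once.

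For the odd-character case I would verify that the $L$-value correction in \eqref{eq:repodd} also vanishes. Since $\psi(\gamma) = \chi_1\overline{\chi_2}(d) = \chi_1(1)\overline{\chi_2}(1) = 1$, the factor $(1 - \psi(\gamma))$ is zero, so the correction contributes nothing and \eqref{eq:repodd} again collapses to $S_{\chi_1,\chi_2}(1, q_1 q_2) = -S_{\chi_2,\chi_1}(\gamma') = 0$.

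I do not anticipate any real obstacle; the proposition is essentially a one-line corollary of reciprocity. The only point requiring comment is the convenient fact that the same choice $a = d = 1$ simultaneously makes $\gamma'$ parabolic (killing the main term) and forces $\psi(\gamma) = 1$ (killing the correction term), which is why both parities can be handled with the same representative.
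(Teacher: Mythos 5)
Your proposal is correct and follows exactly the paper's own argument: both take $\gamma = \begin{psmallmatrix} 1 & 0 \\ q_1 q_2 & 1 \end{psmallmatrix}$ in Theorem \ref{thm:reciprocity}, observe that $\gamma' = \begin{psmallmatrix} 1 & -1 \\ 0 & 1 \end{psmallmatrix}$ has vanishing Dedekind sum, and that $1 - \psi(\gamma) = 0$ kills the correction term in the odd case. Your write-up is simply a more detailed version of the same one-line deduction.
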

\begin{proof}
We take $\gamma = (\begin{smallmatrix} 1 & 0 \\ q_1 q_2 & 1 \end{smallmatrix})$ in Theorem \ref{thm:reciprocity}, so $\gamma' = (\begin{smallmatrix} 1 & -1 \\ 0 & 1 \end{smallmatrix})$.  It is easy to see that $S_{\chi_1, \chi_2}(\gamma') = 0$, and that $1-\psi(\gamma) = 0$.
Therefore, Proposition \ref{prop:a1value} follows from the reciprocity formula.
\end{proof}
In \cite[Proposition 2.3]{dillongaston}, Dillon and Gaston pointed out that for $c\geq 1$ and $q_1q_2 | c$,
\begin{align}
    S_{\chi_1,\chi_2}(-a,c) &= -\chi_2(-1)S_{\chi_1,\chi_2}(a,c).
    \label{eq:a_c-a}
\end{align} Letting $c = q_1q_2$ and $a = 1$ in \eqref{eq:a_c-a}, we can use Proposition \ref{prop:a1value}, to conclude that $S_{\chi_1,\chi_2}(q_1q_2-1,q_1q_2)=0$. Moreover, using \eqref{eq:a_c-a}, one can easily observe that for $a \pmod c$, if $S_{\chi_1,\chi_2}(a,c) = 0$ then $S_{\chi_1,\chi_2}(c-a,c) = 0$. This symmetry between the pairs $(a,c)$ and $(c-a,c)$ can be seen in Figure \ref{fig:kerint} above. 

Dillon and Gaston \cite[Proposition 2.4]{dillongaston} also observed that
\begin{equation}
\label{eq:abara}
S_{\chi_1, \chi_2}(\overline{a}, c) = \chi_2(-1) \psi(a) S_{\chi_1, \chi_2}(a,c),
\end{equation}
where $a \overline{a} \equiv 1 \mod{c}$.
\begin{proposition}
\label{prop:squaresinkernel}
Suppose that $a \in \ZZ$ is so that $a^2 + 1 \equiv 0 \mod{c}$, and that $\psi(a) = +1$.  Then $S_{\chi_1, \chi_2}(a,c) = 0$.  Similarly, if $a^2 \equiv 1 \mod{c}$, and $\psi(a) = - \chi_2(-1)$, then $S_{\chi_1, \chi_2}(a,c) = 0$.
\end{proposition}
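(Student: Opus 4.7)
The plan is to deduce both claims by combining the two symmetry identities \eqref{eq:a_c-a} and \eqref{eq:abara} already recalled in the text, together with the periodicity of $S_{\chi_1,\chi_2}(a,c)$ in $a$ modulo $c$.

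For the first assertion, I would observe that the congruence $a^2 + 1 \equiv 0 \pmod{c}$ is equivalent to $\overline{a} \equiv -a \pmod{c}$, so by periodicity $S_{\chi_1,\chi_2}(\overline{a},c) = S_{\chi_1,\chi_2}(-a,c)$. Applying \eqref{eq:abara} on the left and \eqref{eq:a_c-a} on the right yields
\begin{equation*}
\chi_2(-1)\psi(a) S_{\chi_1,\chi_2}(a,c) = -\chi_2(-1) S_{\chi_1,\chi_2}(a,c),
\end{equation*}
which rearranges to $(\psi(a)+1) S_{\chi_1,\chi_2}(a,c) = 0$. The assumption $\psi(a) = +1$ then forces $S_{\chi_1,\chi_2}(a,c) = 0$.

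For the second assertion, the congruence $a^2 \equiv 1 \pmod{c}$ gives $\overline{a} \equiv a \pmod c$, so \eqref{eq:abara} directly yields $(1 - \chi_2(-1)\psi(a)) S_{\chi_1,\chi_2}(a,c) = 0$. Under $\psi(a) = -\chi_2(-1)$ we have $\chi_2(-1)\psi(a) = -\chi_2(-1)^2 = -1$, so the scalar factor is $2$, and we conclude $S_{\chi_1,\chi_2}(a,c) = 0$.

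Neither step presents a real obstacle; this is essentially a bookkeeping exercise with the two symmetry identities. The only point that requires a little care is verifying that $\chi_2(-1) = \pm 1$ and that the characters $\chi_1, \chi_2$ are only used through $\psi = \chi_1 \overline{\chi_2}$ and $\chi_2(-1)$, so nothing else needs to be tracked. I would write both cases side by side to emphasize the parallel structure of the argument.
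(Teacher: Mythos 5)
Your argument is correct and is essentially identical to the paper's: both cases are settled by combining the symmetries \eqref{eq:a_c-a} and \eqref{eq:abara} with periodicity in $a$ modulo $c$, exactly as the authors do. The only cosmetic difference is that you apply the two identities to the two sides of $S_{\chi_1,\chi_2}(\overline{a},c)=S_{\chi_1,\chi_2}(-a,c)$ while the paper chains them starting from $S_{\chi_1,\chi_2}(a,c)=S_{\chi_1,\chi_2}(-\overline{a},c)$; the computation is the same.
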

\begin{proof}
First consider the case where $a^2 + 1 \equiv 0 \mod{c}$, i.e., $a \equiv - \overline{a} \mod{c}$. Combining \eqref{eq:a_c-a} and \eqref{eq:abara}, we deduce 
$$S_{\chi_1, \chi_2}(a,c) = S_{\chi_1, \chi_2}(-\overline{a}, c)
= - \chi_2(-1) S_{\chi_1, \chi_2}(\overline{a},c) 
= - \psi(a) S_{\chi_1, \chi_2}(a,c). 
$$
The case with $a^2 \equiv 1 \mod{c}$ is even simpler, using only \eqref{eq:abara}.
\end{proof}
\begin{remark}
Proposition \ref{prop:squaresinkernel} is a simple generalization of a well-known result for the classical Dedekind sum; see \cite[Theorem 3.6(c)]{modfunc}.
\end{remark}

\begin{example}
From Figure \ref{subfig:kint55}, we see the first occurrence of $(a,c)$ in $K_{5,5}$ with $a \not \equiv \pm 1 \mod{c}$ is with $c=100$, and $a=49,51$.  We now explain how to prove (without a computer) that these points are in $K_{\chi_1,\chi_2}$ for $\chi_1$ and $\chi_2$ odd.
Note that $51^2 \equiv 49^2 \equiv 1 \mod{100}$.  Since $51 \equiv 1 \mod{25}$, this means 
$\psi(50 \pm 1) = 1$.
If $\chi_1$ and $\chi_2$ are odd, then Proposition \ref{prop:squaresinkernel} implies $S_{\chi_1, \chi_2}(50 \pm 1, 100) = 0$, which is the desired claim.
This argument does not work for $\chi_1$ and $\chi_2$ even.   
We leave it as a problem for the interested reader to cover this remaining case (one may note that the only even primitive character modulo $5$ is the Legendre symbol, so there is only one case left). 
\end{example}

\begin{proposition}
If $S_{\chi_1,\chi_2}(1+ndq_1q_2,d^2q_1q_2)=0$ for some $n,d \in \ZZ$,  then \\ $S_{\chi_1,\chi_2}(1+kndq_1q_2,kd^2q_1q_2)=0$ for all $k \in \ZZ$. 
\label{prop:mulsq}
\end{proposition}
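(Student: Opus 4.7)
The plan is to realize the pair $(1+ndq_1q_2,\, d^2 q_1 q_2)$ as the first column of a parabolic element of $\Gamma_0(q_1q_2)$ whose $k$-th power has first column $(1+kndq_1q_2,\, kd^2q_1q_2)$, and then invoke the crossed homomorphism identity \eqref{eq:hom}. Set $N := q_1q_2$ to shorten notation.

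First I would exhibit an explicit lift. Writing $p=ndN$, $q=-n^2N$, $r=d^2N$, I would let
$$\gamma := \begin{pmatrix} 1+p & q \\ r & 1-p \end{pmatrix} = \begin{pmatrix} 1+ndN & -n^2 N \\ d^2 N & 1-ndN \end{pmatrix}.$$
A quick determinant check gives $\det \gamma = 1 - p^2 - qr = 1 - n^2d^2N^2 + n^2d^2N^2 = 1$, and $r = d^2 N$ is divisible by $N$, so $\gamma \in \Gamma_0(N)$. Since \eqref{eq:dedsum} depends only on the first column, the hypothesis $S_{\chi_1,\chi_2}(1+ndN, d^2N) = 0$ is exactly $S_{\chi_1,\chi_2}(\gamma) = 0$.

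Next I would exploit that $\gamma$ is parabolic with trace $2$. The matrix $\gamma - I = \bigl(\begin{smallmatrix} p & q \\ r & -p \end{smallmatrix}\bigr)$ has trace $0$ and determinant $-p^2 - qr = 0$, so $(\gamma - I)^2 = (p^2+qr)I = 0$. Binomial expansion of $\gamma^k = (I + (\gamma-I))^k$ therefore collapses to
$$\gamma^k = I + k(\gamma - I) = \begin{pmatrix} 1+kndN & -kn^2N \\ kd^2N & 1-kndN \end{pmatrix},$$
which is valid for every $k \in \ZZ$ (for negative $k$, observe $\gamma^{-1} = I - (\gamma - I)$ directly from the formula, and iterate). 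In particular the first column of $\gamma^k$ is exactly the pair $(1+kndN,\, kd^2N)$ we want, and $\gamma^k \in \Gamma_0(N)$ since its lower-left entry is $kd^2N$.

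Finally I would apply the crossed homomorphism identity. For $k \geq 1$, induction on \eqref{eq:hom} yields
$$S_{\chi_1,\chi_2}(\gamma^k) = \Big(\sum_{i=0}^{k-1} \psi(\gamma)^i \Big) S_{\chi_1,\chi_2}(\gamma) = 0.$$
The case $k=0$ is trivial ($S_{\chi_1,\chi_2}(I)=0$ follows by applying \eqref{eq:hom} to $I \cdot I$). For $k<0$, applying \eqref{eq:hom} to $\gamma \cdot \gamma^{-1} = I$ gives $\psi(\gamma) S_{\chi_1,\chi_2}(\gamma^{-1}) = -S_{\chi_1,\chi_2}(\gamma) = 0$, hence $S_{\chi_1,\chi_2}(\gamma^{-1}) = 0$, and the previous argument applied to $\gamma^{-1}$ handles $-k>0$. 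Since the Dedekind sum depends only on the first column, this gives $S_{\chi_1,\chi_2}(1+kndN,\, kd^2N) = 0$, completing the proof. I do not anticipate any real obstacle; the only piece of insight is choosing $\gamma$ so that $\gamma - I$ is nilpotent, which makes $\gamma^k$ linear in $k$.
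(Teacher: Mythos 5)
Your proof is correct and follows essentially the same route as the paper: both construct the parabolic lift $\gamma = I + \bigl(\begin{smallmatrix} ndN & -n^2N \\ d^2N & -ndN\end{smallmatrix}\bigr)$, use the nilpotence of $\gamma - I$ to get $\gamma^k = I + k(\gamma - I)$, and conclude via the crossed homomorphism property (the paper phrases this as $K_{\chi_1,\chi_2}$ being a subgroup, hence closed under taking powers). Your write-up merely makes explicit the steps the paper leaves as ``one can easily show.''
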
 
\begin{remark} 
Proposition \ref{prop:mulsq} can be used to explain some linear patterns visible among the points in Figure \ref{fig:kerint}.  For instance, we have $a=51$, $c=100$ in $K_{5,5}$ visible in Figure \ref{subfig:kint55}, which corresponds to $d=2$, $n=1$, in Proposition \ref{prop:mulsq}.  The point $a=101$, $c=200$ then corresponds to $k=2$ in Proposition \ref{prop:mulsq}.
\end{remark}
\begin{proof}
We prove this by considering matrices of the form $$\gamma  = \pmattwotwo{1+ndq_1q_2}{-n^2q_1q_2}{d^2q_1q_2}{1-ndq_1q_2}
=
I + QA,
\quad \text{where} 
\quad
A = \pmattwotwo{nd}{-n^2}{d^2}{-nd},
\quad Q = q_1 q_2.
$$ 
Note that $A^2 = 0$.  Using this, one can easily show that $\gamma^k = I+kQA$, for any $k \in \mathbb{Z}$.  If $\gamma \in K_{\chi_1, \chi_2}$, then so is $\gamma^k$, which translates to the desired statement.
\end{proof}

\begin{corollary}
We have that
$S_{\chi_1,\chi_2}(1,kq_1q_2)=0$ for all $k \in \ZZ$.
\label{cor:a1}
\end{corollary}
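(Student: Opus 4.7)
The plan is to obtain this as a degenerate case of Proposition \ref{prop:mulsq}, using Proposition \ref{prop:a1value} as the base. Specifically, I would apply Proposition \ref{prop:mulsq} with $n=0$ and $d=1$. With these choices, the matrix in the proof of Proposition \ref{prop:mulsq} becomes
\[
\gamma = I + q_1 q_2 \pmattwotwo{0}{0}{1}{0} = \pmattwotwo{1}{0}{q_1 q_2}{1},
\]
and the hypothesis $S_{\chi_1,\chi_2}(1 + n d q_1 q_2,\, d^2 q_1 q_2) = 0$ collapses precisely to $S_{\chi_1, \chi_2}(1, q_1 q_2) = 0$, which is Proposition \ref{prop:a1value}.

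Then the conclusion of Proposition \ref{prop:mulsq} becomes $S_{\chi_1, \chi_2}(1, k q_1 q_2) = 0$ for every $k \in \ZZ$, which is exactly the corollary. Equivalently, one may just note that $\gamma^k = \bigl(\begin{smallmatrix}1 & 0 \\ kq_1 q_2 & 1\end{smallmatrix}\bigr)$ lies in the kernel of $S_{\chi_1, \chi_2}$, since powers of an element of the kernel remain in the kernel (this uses the crossed-homomorphism identity \eqref{eq:hom} together with $\psi(\gamma) = 1$, as $\gamma \in \Gamma_1(q_1 q_2)$).

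For the edge cases: $k = 0$ gives $S_{\chi_1, \chi_2}(1, 0) = 0$, which was recorded directly after \eqref{eq:dedsum}, and negative $k$ is already covered by Proposition \ref{prop:mulsq} (whose proof works for all integer exponents), or alternatively by the symmetry $S_{\chi_1, \chi_2}(-a,-c) = S_{\chi_1, \chi_2}(a,c)$. There is no real obstacle here; the only thing to notice is that the base case of the $\gamma^k$-argument in Proposition \ref{prop:mulsq} is already the strongest nontrivial vanishing we have along the line $a=1$, and it propagates to all multiples of $q_1 q_2$.
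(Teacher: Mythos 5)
Your proof is correct and matches the paper's argument exactly: the paper also derives the corollary by applying Proposition \ref{prop:mulsq} with $n=0$, $d=1$, with Proposition \ref{prop:a1value} supplying the base case. Your additional remarks on $k=0$ and negative $k$ are fine but not needed beyond what Proposition \ref{prop:mulsq} (stated for all $k \in \ZZ$) already gives.
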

\begin{proof}
Apply Propositions \ref{prop:a1value} and \ref{prop:mulsq}, with $n=0$ and $d =1$.
\end{proof}
 The points $(1,c)$ create a vertical line, depicted in Figure \ref{fig:kerint} above. Similarly, one can use \eqref{eq:a_c-a} to conclude that the points $(c-1,c)$ create a line of slope $1$, also depicted in Figure \ref{fig:kerint}. 

Now we prove Theorem \ref{thm:kernelisbig}. 
By Corollary \ref{cor:a1}, we have that $\begin{psmallmatrix} 1&0\\cq_1q_2&1 \end{psmallmatrix} \in K_{\chi_1,\chi_2}^1$, for all choices of $\chi_1$, $\chi_2$, so the result follows immediately.
\qed
\begin{remark}
Figure \ref{fig:kerint} indicates that there exist examples of $q_1, q_2$ and $c$ for which the \emph{only} element $(a,c) \in K_{q_1, q_2}^{1}$ with $0 < a <  c$ is the point exhibited in Corollary \ref{cor:a1}.  In this sense, Corollary \ref{cor:a1} is sharp.
\end{remark}    

Now we briefly prove Proposition \ref{prop:indexisinfinite}.  By a group isomorphism theorem, we have
\begin{equation}
K_{\chi_1, \chi_2}^1 \backslash \Gamma_1(q_1 q_2) \simeq S_{\chi_1, \chi_2}(\Gamma_1(q_1 q_2)) \subseteq \CC.
\end{equation}
Since every nonzero element of $\mathbb{C}$ has infinite order, every nontrivial element of the quotient group $K_{\chi_1, \chi_2}^1 \backslash \Gamma_1(q_1 q_2)$ also has infinite order.  
In particular, this proves Proposition \ref{prop:indexisinfinite}.  This argument shows that if $\gamma \in \Gamma_1(q_1 q_2)$ is such that $\gamma^k \in K_{\chi_1, \chi_2}^1$ for some integer $k \geq 1$, then $\gamma \in K_{\chi_1, \chi_2}^1$.

\section{The commutator subgroup: Proof of Theorem \ref{thm:commutator}}

For any prime $p$, let $\Gamma(n;p)$ denote the principal congruence subgroup of $SL_n(\ZZ)$ of level $p$. In \cite{commutator}, Lee and Szczarba show that $[\Gamma(n;p),\Gamma(n;p)] = \Gamma(n;p^2)$ for $n \geq 3$ and all primes $p$. In the following proposition, we adapt the proof of Lee and Szczarba to show a one-sided containment of the commutator subgroup for $n=2$ and $p$ not necessarily prime.
\begin{proposition}
For any $Q \geq 1$, then $[\Gamma(Q),\Gamma(Q)] \subseteq \Gamma(Q^2)$.
\label{prop:comm}
\end{proposition}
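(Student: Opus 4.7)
The plan is to reduce the statement to an entrywise congruence calculation on a single commutator. Since $\Gamma(Q^2)$ is (in particular) closed under products and inverses, it suffices to show that every basic commutator $[\gamma_1,\gamma_2]=\gamma_1\gamma_2\gamma_1^{-1}\gamma_2^{-1}$ with $\gamma_1,\gamma_2\in\Gamma(Q)$ already lies in $\Gamma(Q^2)$; the group generated by such commutators is then contained in $\Gamma(Q^2)$, which is what we need.

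To set up the calculation, I would write each element of $\Gamma(Q)$ as $\gamma_i=I+QM_i$, where $M_i\in M_2(\mathbb{Z})$. Expanding the products as integer matrices, one gets
\begin{equation*}
\gamma_1\gamma_2 = I + Q(M_1+M_2) + Q^2 M_1M_2,\qquad \gamma_2\gamma_1 = I + Q(M_1+M_2) + Q^2 M_2M_1,
\end{equation*}
so $\gamma_1\gamma_2 - \gamma_2\gamma_1 = Q^2[M_1,M_2]$, where $[M_1,M_2]=M_1M_2-M_2M_1$ is an integer matrix. In particular $\gamma_1\gamma_2\equiv \gamma_2\gamma_1\pmod{Q^2}$ entrywise.

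Next I would rewrite the commutator in the form $[\gamma_1,\gamma_2]=(\gamma_1\gamma_2)(\gamma_2\gamma_1)^{-1}$. Using the displayed identity,
\begin{equation*}
[\gamma_1,\gamma_2] \;=\; \bigl(\gamma_2\gamma_1 + Q^2[M_1,M_2]\bigr)(\gamma_2\gamma_1)^{-1} \;=\; I + Q^2\,[M_1,M_2](\gamma_2\gamma_1)^{-1}.
\end{equation*}
Since $\gamma_2\gamma_1\in SL_2(\mathbb{Z})$, its inverse has integer entries, so the matrix $[M_1,M_2](\gamma_2\gamma_1)^{-1}$ is integral. Thus $[\gamma_1,\gamma_2]\in I + Q^2 M_2(\mathbb{Z})$ and, being automatically of determinant one, lies in $\Gamma(Q^2)$. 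This finishes the proof.

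There is really no hard step here; the whole argument is bookkeeping. The only thing to be careful about is that the $O(Q^2)$ remainder after reduction is an honest integer matrix, which is why I prefer to use the presentation $[\gamma_1,\gamma_2]=(\gamma_1\gamma_2)(\gamma_2\gamma_1)^{-1}$ (so that the inverse appearing is of an element of $SL_2(\mathbb{Z})$) rather than manipulating $\gamma_i^{-1}$ as $I-QM_i$ plus higher order terms.
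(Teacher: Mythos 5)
Your argument is correct: reducing to a single commutator is legitimate since $\Gamma(Q^2)$ is a group, the identity $\gamma_1\gamma_2-\gamma_2\gamma_1=Q^2(M_1M_2-M_2M_1)$ is right, and writing $[\gamma_1,\gamma_2]=(\gamma_1\gamma_2)(\gamma_2\gamma_1)^{-1}=I+Q^2\,(M_1M_2-M_2M_1)(\gamma_2\gamma_1)^{-1}$ cleanly avoids having to expand $\gamma_i^{-1}$. The paper takes a different, more structural route (following Lee--Szczarba): it defines the map $\varphi:\Gamma(Q)\to M_{2\times 2}(\ZZ/Q\ZZ)$ by $\varphi(A)=\frac{A-I}{Q}\bmod Q$, checks that $\varphi$ is a group homomorphism into an abelian group, and concludes that $[\Gamma(Q),\Gamma(Q)]\subseteq\ker(\varphi)=\Gamma(Q^2)$. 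The underlying computation is the same --- verifying that $\varphi$ is a homomorphism amounts to your expansion $\gamma_1\gamma_2=I+Q(M_1+M_2)+O(Q^2)$ --- but the homomorphism packaging buys more: it identifies $\ker(\varphi)$ exactly as $\Gamma(Q^2)$, and (as the paper remarks) the image of $\varphi$ can be identified with the trace-zero matrices, which is the route toward the stronger equality $[\Gamma(p),\Gamma(p)]=\Gamma(p^2)$ in the $n\geq 3$ setting. Your bare-hands version proves exactly the stated containment with no more machinery than matrix multiplication, which is a perfectly good trade-off here.
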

\begin{proof}
Define the map $\varphi: \Gamma(Q) \rightarrow M_{2\times2}(\ZZ/Q\ZZ)$ by $\varphi(A) = \frac{A - I}{Q} \pmod{Q}$. It is not difficult to show $\varphi$ is a group homomorphism. Since $M_{2\times2}(\ZZ/Q\ZZ)$ is abelian, then $[\Gamma(Q), \Gamma(Q)] \subseteq \ker(\varphi)$. We can see that $\ker(\varphi) = \Gamma(Q^2)$ by the definition of $\varphi$.
\end{proof}
\begin{remark}
The proof of Lee and Szczarba may be easily adapted to additionally show that the image of $\varphi$ is the subset of $M_{2\times 2}(\ZZ/Q\ZZ)$ of trace $\equiv 0 \mod{Q}$.
\end{remark}


Proposition \ref{prop:comm} shows that $[\Gamma(q_1q_2),\Gamma(q_1q_2)] \subseteq \Gamma(q^2_1q^2_2)$. 
From Proposition \ref{prop:mulsq} and Corollary \ref{cor:a1}, we see that the inclusion is strict, i.e. $\Gamma(q^2_1q^2_2) \subsetneq K^1_{q_1,q_2}$.  This completes the proof of Theorem \ref{thm:commutator}.

\section{The Galois action}
\label{section:Galois}
We now study the kernels further by comparing $K_{\chi_1, \chi_2}$ for different choices of $\chi_1, \chi_2$, with a specified choice of $q_1,q_2$. Let  $\zeta_n = e^{2\pi i/n}$. Figure \ref{fig:d511} depicts the elements of $K_{\chi_1,\chi_2}^1$ with $0 < c \leq 1100$ for the Dedekind sum associated to $\chi_1 \mod 5$, the character mapping $2 \mapsto i$, and $\chi_{2} \mod 11$, which maps $2 \mapsto \zeta_{10}$. However, we found that Figure \ref{fig:d511} also represents the kernel for other characters, 
such as the pair $\chi'_1 \bmod 5: 2 \mapsto - i$ and $\chi_{2}' \bmod 11: 2 \mapsto \zeta_{10}^{3}.$
\begin{figure}[H]
    \centering
    \includegraphics[scale=0.4]{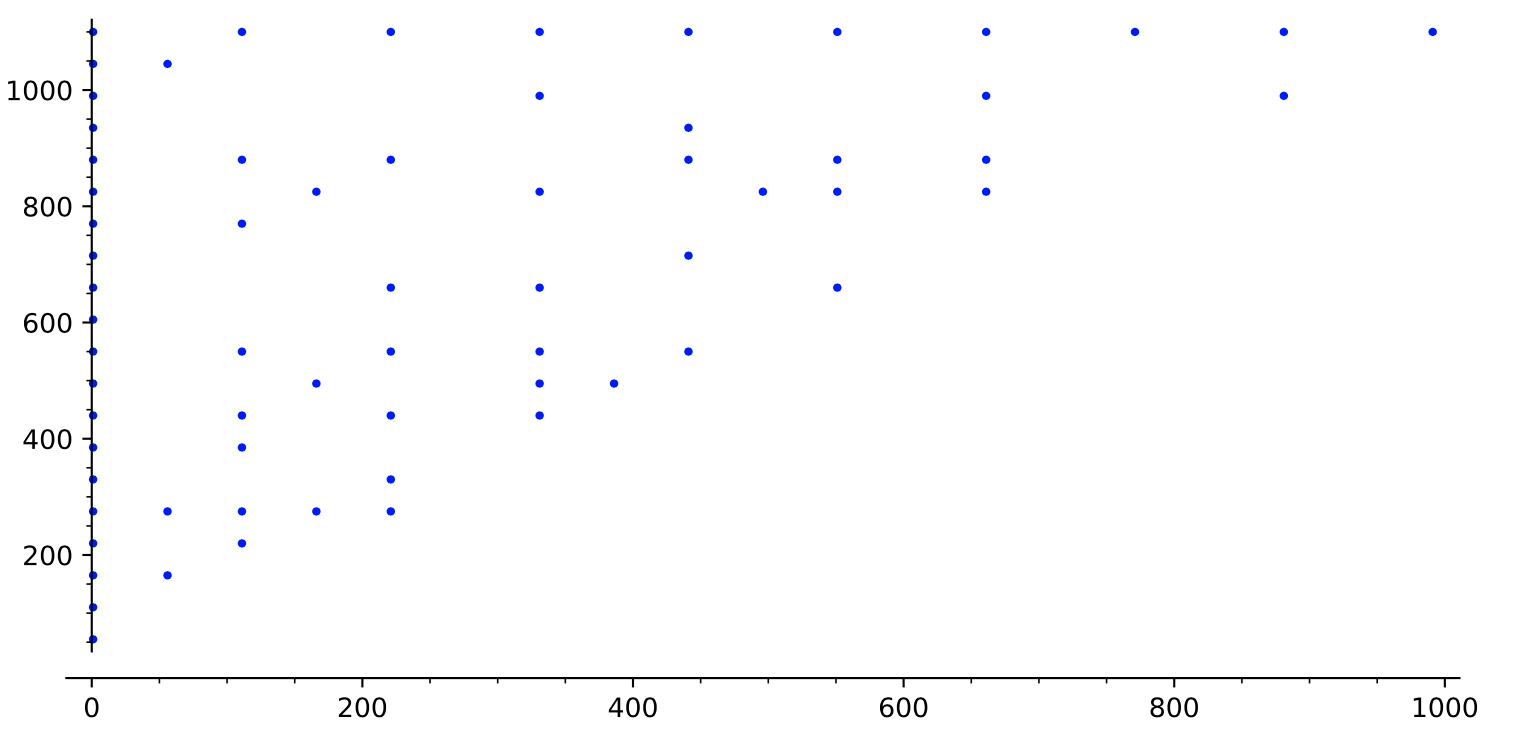}
    \caption{ $K_{\chi_1,\chi_2}^1$ for $\chi_1 \bmod 5$ and $\chi_2 \bmod 11$}
    \label{fig:d511}
\end{figure}
This reoccurring pattern of identical kernels was found for other conductors $q_1,q_2$ as well. To explain this pattern we use Galois theory.
For characters $\chi_1,\chi_2 \mod q_1$ and $q_2$ respectively, we let $F = \QQ\left(\zeta_{\phi(q_1)}, \zeta_{\phi(q_2)}\right) = \QQ(\zeta_{[\phi(q_1), \phi(q_2)]})$ be the field extension over $\QQ$ generated by the $\phi(q_1)$-th and $\phi(q_2)$-th  primitive roots of unity. Let 
\begin{align}
    \Ded(q_1,q_2) = \{S_{\chi_1,\chi_2} ~|~ \chi_1\chi_2(-1)=1 \text{ and } \chi_i \text{ primitive modulo } q_i, i=1,2 \}.
    \label{eq:dedset}
\end{align}

For $\sigma \in \Gal(F/\QQ)$, and $\chi$ a Dirichlet character taking values in $F$, let $\chi^\sigma$ denote the character defined by $n \rightarrow \sigma(\chi(n))$. By the definition of the Dedekind sum in \eqref{eq:dedsum}, we see that $S_{\chi_1,\chi_2}(\gamma)$ lies in $F$, for all $\gamma \in \Gamma_0(q_1 q_2)$, since the values taken by the Bernoulli function $B_1$ in \eqref{eq:dedsum} are rational.

\begin{proposition}
Let $F=\QQ\left(\zeta_{\phi(q_1)}, \zeta_{\phi(q_2)}\right)$.
Then there exists a natural group action of $\Gal(F/\QQ)$ on $\Ded(q_1,q_2)$ which we denote as $S_{\chi_1,\chi_2}^\sigma$ for $\sigma \in \Gal(F/\QQ)$. 
\label{prop:groupact}
\end{proposition}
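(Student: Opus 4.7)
The plan is to define the Galois action pointwise on the scalar values of the Dedekind sum: for $\sigma \in \Gal(F/\QQ)$, set
\begin{equation}
S_{\chi_1,\chi_2}^{\sigma}(\gamma) := \sigma\bigl(S_{\chi_1,\chi_2}(\gamma)\bigr), \qquad \gamma \in \Gamma_0(q_1 q_2).
\end{equation}
This is automatically a well-defined function $\Gamma_0(q_1q_2)\to F$, so the first real task is to show that it is again an element of $\Ded(q_1,q_2)$. Since the Bernoulli values $B_1(j/c)$ and $B_1(n/q_1+aj/c)$ appearing in \eqref{eq:dedsum} are rational (hence fixed by $\sigma$), applying $\sigma$ to the defining sum just replaces each $\overline{\chi_i}$ by $\overline{\chi_i^\sigma}$, yielding
\begin{equation}
S_{\chi_1,\chi_2}^\sigma(\gamma) = S_{\chi_1^\sigma,\chi_2^\sigma}(\gamma).
\end{equation}
So the action is concretely induced on the indexing pairs by $(\chi_1,\chi_2) \mapsto (\chi_1^\sigma,\chi_2^\sigma)$.

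Next I would verify that $(\chi_1^\sigma, \chi_2^\sigma)$ is a legitimate index for $\Ded(q_1,q_2)$. The map $\chi_i^\sigma$ is obviously a Dirichlet character modulo $q_i$ taking values in $F$. For primitivity: if $\chi_i^\sigma$ factored through $(\ZZ/q_i'\ZZ)^\times$ for some proper divisor $q_i' \mid q_i$, then $\chi_i^\sigma(n)=1$ whenever $n \equiv 1 \pmod{q_i'}$; applying $\sigma^{-1}$ and using the injectivity of a field automorphism forces $\chi_i(n)=1$ on the same residues, contradicting primitivity of $\chi_i$. The parity condition is preserved because $\chi_i(-1) \in \{\pm 1\} \subseteq \QQ$ is fixed by $\sigma$, so $\chi_1^\sigma \chi_2^\sigma(-1) = \chi_1\chi_2(-1) = 1$.

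Finally, the group action axioms are immediate from the pointwise definition: the identity of $\Gal(F/\QQ)$ acts trivially, and for $\sigma, \tau \in \Gal(F/\QQ)$ one computes
\begin{equation}
(S_{\chi_1,\chi_2}^\sigma)^\tau(\gamma) = \tau\bigl(\sigma(S_{\chi_1,\chi_2}(\gamma))\bigr) = S_{\chi_1,\chi_2}^{\tau\sigma}(\gamma).
\end{equation}
There is no real obstacle in this proposition; the only point requiring any care is the preservation of primitivity of $\chi_i$ under $\sigma$, which rests only on the injectivity of a field automorphism. Everything else is a formal consequence of the explicit finite formula \eqref{eq:dedsum} and the fact that $\sigma$ fixes $\QQ$ pointwise.
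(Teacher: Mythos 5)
Your proposal is correct and follows essentially the same route as the paper: define $S_{\chi_1,\chi_2}^{\sigma}(\gamma)=\sigma(S_{\chi_1,\chi_2}(\gamma))$, use the rationality of the Bernoulli values in \eqref{eq:dedsum} to identify this with $S_{\chi_1^\sigma,\chi_2^\sigma}(\gamma)$, and check the action axioms. You additionally spell out that $(\chi_1^\sigma,\chi_2^\sigma)$ remains a valid index (primitivity and the parity condition are preserved), which the paper leaves implicit; this is a worthwhile, correct elaboration rather than a different argument.
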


\begin{proof}
We define the natural Galois action by $S_{\chi_1,\chi_2}^\sigma(\gamma) := \sigma(S_{\chi_1, \chi_2}(\gamma))$, which by the definition \eqref{eq:dedsum} equals $S_{\chi_1^{\sigma}, \chi_2^{\sigma}}(\gamma)$, for any $\gamma \in \Gamma_0(q_1 q_2)$.  From this definition it is easy to see that this is a group action.
\end{proof}

\begin{remark}
If $k\in \ZZ$ is coprime to $\phi(q_1)\phi(q_2),$ the mapping $\omega \rightarrow \omega^k$, where $\omega$ is a primitive root of unity in $F$, is an automorphism of $F$. In fact, all automorphisms in the $\Gal(F/\QQ)$ can be formed this way.
\end{remark}
%

\begin{corollary}
If two Dedekind sums, $S_{\chi_1,\chi_2}$ and $S_{\chi'_1,\chi'_2},$ are in the same orbit of $\Ded(q_1,q_2)$ under the action of $\Gal(F/\QQ)$, then $S_{\chi_1,\chi_2}$ and $S_{\chi'_1,\chi'_2}$ have the same kernel. 
\label{cor:galois}
\end{corollary}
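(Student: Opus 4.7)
The plan is to derive this as an essentially immediate consequence of Proposition \ref{prop:groupact}. First I would unpack what it means for $S_{\chi_1,\chi_2}$ and $S_{\chi'_1,\chi'_2}$ to lie in the same orbit: there exists $\sigma \in \Gal(F/\QQ)$ such that $S_{\chi'_1,\chi'_2} = S_{\chi_1,\chi_2}^\sigma$. By the construction in the proof of Proposition \ref{prop:groupact}, this means that for every $\gamma \in \Gamma_0(q_1 q_2)$,
\[
S_{\chi'_1,\chi'_2}(\gamma) = \sigma\bigl(S_{\chi_1,\chi_2}(\gamma)\bigr).
\]

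The next step is to exploit that $\sigma$ is a field automorphism of $F$, hence in particular a bijection fixing $0$. Therefore $\sigma(S_{\chi_1,\chi_2}(\gamma)) = 0$ if and only if $S_{\chi_1,\chi_2}(\gamma) = 0$. Combining with the displayed equality, $\gamma \in \ker(S_{\chi'_1,\chi'_2})$ if and only if $\gamma \in \ker(S_{\chi_1,\chi_2})$, which is exactly the assertion that $K_{\chi_1,\chi_2} = K_{\chi'_1,\chi'_2}$.

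There is really no obstacle here; the only subtlety worth flagging in the write-up is that one must know a priori that $S_{\chi_1,\chi_2}(\gamma) \in F$ so that $\sigma$ may be applied to it, but this is already recorded before Proposition \ref{prop:groupact} via the finite sum formula \eqref{eq:dedsum} and the fact that $B_1$ takes rational values. With that observation in hand the argument is one line, so I would present the corollary as a short direct deduction from Proposition \ref{prop:groupact} rather than expanding it further.
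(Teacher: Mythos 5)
Your proof is correct and follows essentially the same route as the paper: both reduce to the identity $S_{\chi_1^{\sigma},\chi_2^{\sigma}}(\gamma)=\sigma(S_{\chi_1,\chi_2}(\gamma))$ from Proposition \ref{prop:groupact} and the fact that a field automorphism vanishes exactly at $0$. If anything, your write-up is slightly more careful than the paper's, since it makes the two-sided implication (and hence equality, not mere containment, of kernels) explicit.
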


\begin{remark}
Corollary \ref{cor:galois} implies that when studying the kernel of Dedekinds sums associated to specified $q_1,q_2$, we only need to examine a representative for each orbit, which leads to a significant efficiency in computation. 
\end{remark}

\begin{proof}
The statement that $S_{\chi_1, \chi_2}$ and $S_{\chi_1', \chi_2'}$ lie in the same orbit simply means that there exists $\sigma \in \Gal(F/\QQ)$ so that $\chi_1' = \chi_1^{\sigma}$ and $\chi_2' = \chi_2^{\sigma}$.  It is clear that if $\gamma \in K_{\chi_1, \chi_2}$, then $\gamma \in K_{\chi_1^{\sigma}, \chi_2^{\sigma}}$ also.
\end{proof}

\begin{example}
Considering Figure \ref{fig:d511}, letting $k = 3$ we see that 
\begin{itemize}
    \item $q_1=5: \chi_1(2)^3 = i^3 = -i = \chi'_1(2)$ 
    \item $q_2=11: \chi_2(2)^3 = \zeta_{10}^3  = \chi'_2(2)$.
\end{itemize}
It then follows that the Dedekind sums associated to the two pairs of characters are in the same orbit and subsequently have the same kernel, corroborating our corollary.
\end{example}

%

\begin{section}{Acknowledgement}
This work was conducted in summer 2020 during an REU conducted at Texas A\&M University. The authors thank the Department of Mathematics at Texas A\&M and the NSF (DMS-1757872) for supporting the REU. In addition, this material is based upon work supported by the National Science Foundation under agreement No. DMS-170222 (M.Y.).  Any opinions, findings and conclusions or recommendations expressed in this material are those of the authors and do not necessarily reflect the views of the National Science Foundation. The authors would like to thank Agniva Dasgupta (Texas A\&M University) for his immense help and feedback throughout.
\end{section}


\begin{thebibliography}{20}


\bibitem[A]{modfunc} T. Apostol, \textit{Modular Functions and Dirichlet Series in Number Theory.} Graduate Text in Mathematics, 41. Springer-Verlag, New York, 1990.



\bibitem[DG]{dillongaston} T. Dillon, S. Gaston, \textit{An average of generalized Dedekind sums.}  J. Number Theory 212 (2020), 323--338. 


\bibitem[LS]{commutator} R. Lee, R. H. Szczarba, \textit{On the Homology and Cohomology of congruence subgroups.}  Invent. Math. 33 (1976), no. 1, 15--53

\bibitem[SAGE]{sage} SageMath, the Sage Mathematics Software System (Version
  9.0), The Sage Developers, 2020, {\tt https://www.sagemath.org}.

\bibitem[SVY]{stuckervennosyoung} T. Stucker, A. Vennos, M. P. Young, \textit{Dedekind sums arising from newform Eisenstein series.} preprint, arXiv:1907.01524, 2019.

\bibitem[Y]{YoungEisenstein} M. P. Young, \textit{Explicit calculations with Eisenstein series},
J. Number Theory 199 (2019), 1--48.
\end{thebibliography}
\end{document}